\documentclass[twoside, 12pt]{article}
\usepackage[utf8]{inputenc}
\usepackage[T1]{fontenc}
\usepackage{amssymb,amsmath,amsthm,mathtools,mathrsfs}
\usepackage{booktabs}
\usepackage{enumitem}
\usepackage[colorlinks,bookmarks,linkcolor=black,citecolor=black]{hyperref}
\usepackage{graphicx}
\usepackage{color}
\usepackage[top=2cm, bottom=2cm, left=2cm, right=2cm]{geometry}
\usepackage{float}

\newcommand{\bd}{\begin{description}}
\newcommand{\ed}{\end{description}}
\newcommand{\bi}{\begin{itemize}}
\newcommand{\ei}{\end{itemize}}
\newcommand{\be}{\begin{enumerate}}
\newcommand{\ee}{\end{enumerate}}
\newcommand{\beq}{\begin{equation}}
\newcommand{\eeq}{\end{equation}}
\newcommand{\beqs}{\begin{eqnarray*}}
\newcommand{\eeqs}{\end{eqnarray*}}

\definecolor{DarkGreen}{rgb}{0.2, 0.6, 0.3}

\newtheorem{lemma}{Lemma}

\newtheorem{corollary}{Corollary}

\newtheorem{proposition}{Proposition}

\def\endofClaim{\hfill\scalebox{.6}{$\blacksquare$}}
\newcommand{\oldqed}{}

\begin{document}
\title{An explicit connected-sum family with unbounded knot Ramsey-stick gap}

\author{Meng Ji\footnote{School of Mathematical Sciences, and Institute of Mathematics and Interdisciplinary Sciences, Tianjin Normal University, Tianjin, China. Supported by the Tianjin Municipal Education Commission Scientific Research Program Project (Grant No. 2025KJ133).  {\tt
mji@tjnu.edu.cn}}
}
\date{August 6, 2026}
\maketitle

\begin{abstract}
Let $R(K)$ and $s(K)$ denote the Ramsey number and the stick number of a knot $K$. Johnson proved that $R-s$ is unbounded on $(p-1,p)$ torus knots and asked whether other such families exist. We give an affirmative answer. We construct an explicit eight-stick alternating knot $J$ with $c(J)=6$ and $\operatorname{br}(J)=2$. For $K_n=\#^nJ$, the connected-sum formulas yield $R(K_n)\ge 7n+3$ and $s(K_n)\le 5n+3$.
\\[2mm]
{\bf Keywords:} knot Ramsey number, stick number, bridge number, alternating knot, connected sum,
cyclic polytope\\[2mm]
{\bf AMS subject classification 2020:} 57K10, 05D10, 05C10
\end{abstract}

\section{Introduction}

The stick number $s(K)$ of a knot $K$ is the minimum number of straight
segments required to realize $K$ in Euclidean three-space. The Ramsey number
$R(K)$ is the least integer $N$ such that every linear spatial embedding of
the complete graph $K_N$ contains a cycle of knot type $K$.
Negami~\cite{Negami1991} proved that $R(K)$ is finite for every knot, a
foundational result that initiates the quantitative comparison between
$R(K)$ and other invariants; exact values, however, remain unknown beyond the
simplest cases.

Johnson~\cite{Johnson2012} studied the cyclic-polytope embedding on the moment
curve and related $R(K)$ to arc index, bridge number, and crossing number. In
particular, she proved that $R-s$ grows at least linearly for the
$(p-1,p)$ torus knots. This line of investigation was extended in subsequent
work~\cite{JohnsonTorusLink} to torus links, establishing that the
Ramsey--stick gap is also unbounded on certain families of prime torus links.
Open Question~1 of~\cite{Johnson2012} asks whether there are other knot
families for which $R(K)-s(K)$ is unbounded---families that need not be prime
or of torus type. We answer this question affirmatively by constructing an
explicit eight-stick alternating knot and taking its iterated connected sums.
The resulting family consists of composite knots, thereby demonstrating that
the unbounded gap phenomenon extends beyond the prime torus-knot and
torus-link settings studied earlier.

\section{Preliminaries and the Ramsey lower bound}

We write $c(K)$, $br(K)$, and $\alpha(K)$ for the crossing number, bridge
number, and arc index of a knot $K$, respectively. Let $C_N$ denote the
three-dimensional cyclic-polytope embedding determined by $N$ points on the
moment curve and all straight chords between them. Johnson
\cite[Lemma~3]{Johnson2012} proved that if $H$ is a Hamiltonian cycle in
$C_m$ representing $K$, and $b(H)$ is its number of local maxima in the
specified projection direction, then
\begin{equation}
  \alpha(K)+b(H)\le m. \tag{2.1}
\end{equation}
\begin{lemma}[\cite{Johnson2012}]
Every knot $K$ satisfies
\begin{equation}
  R(K)\ge\alpha(K)+br(K). \tag{2.2}
\end{equation}
\end{lemma}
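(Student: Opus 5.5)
We argue by considering the specific linear embedding $C_N$ of $K_N$ given by the cyclic polytope, with $N=R(K)-1$ or, more directly, with $N=R(K)$. By the definition of $R(K)$, every linear spatial embedding of $K_{R(K)}$ contains a cycle of knot type $K$. In particular, the cyclic-polytope embedding $C_{R(K)}$ contains such a cycle $H$. The cycle $H$ uses some number $m\le R(K)$ of the vertices. The key observation is that the vertices of $H$, together with all chords between them, form exactly the cyclic-polytope embedding $C_m$ on those $m$ points of the moment curve. This holds because any subset of points on the moment curve is again a set of points on the moment curve, and the chords are straight segments in both settings. Hence $H$ is a Hamiltonian cycle in $C_m$ representing $K$.

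Now we apply Johnson's inequality (2.1) to $H$ viewed inside $C_m$:
\[
\alpha(K)+b(H)\le m\le R(K).
\]
It remains to bound $b(H)$ from below by $br(K)$. The number $b(H)$ counts local maxima of the polygon $H$ with respect to a fixed height direction. We perturb the direction slightly if needed so that the height function is generic on the vertices. Since $H$ is a polygonal representative of $K$, its local maxima with respect to a generic linear height function count bridges of a presentation of $K$. The bridge number $br(K)$ is by definition the minimum of this count over all representatives and directions, so $b(H)\ge br(K)$. Combining these gives $R(K)\ge\alpha(K)+br(K)$.

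The main point needing care is this last step. We must check that the "local maxima in the specified projection direction" in Johnson's Lemma~3 are maxima of a linear height function on the polygon. Otherwise a small perturbation has to be justified. For a polygon, maxima occur only at vertices, and a generic direction near the specified one gives a Morse-type count. This count is at most the original count if we count vertex-maxima carefully, which yields $b(H)\ge br(K)$. A further small subtlety is the degenerate case in which $K$ is the unknot. Then the inequality reads $R\ge 2+1=3$, which holds trivially since any triangle is a cycle.
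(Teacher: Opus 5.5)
Your proof is correct and follows essentially the route the paper relies on when it cites Johnson: a $K$-cycle in $C_{R(K)}$ is a Hamiltonian cycle of the cyclic-polytope embedding $C_m$ on its own $m\le R(K)$ vertices, so (2.1) together with $b(H)\ge br(K)$ gives $\alpha(K)+br(K)\le m\le R(K)$. One small slip in your unknot remark: the fact that a triangle is a cycle shows $R\le 3$ for the unknot, not $R\ge 3$; the lower bound holds because graphs on fewer than three vertices contain no cycles, and the remark is unnecessary anyway because your main argument already covers the unknot.
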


For an alternating knot, the arc index satisfies
$\alpha(K)=c(K)+2$; this follows from the arc-index theorem for prime
alternating knots together with the connected-sum formula for arc index, and
is also recorded by Cromwell~\cite{Cromwell1998}. We therefore obtain the
following consequence.

\begin{corollary}
If $K$ is alternating, then
\begin{equation}
  R(K)\ge c(K)+br(K)+2. \tag{2.3}
\end{equation}
\end{corollary}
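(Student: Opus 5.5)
The corollary follows by substituting an arc-index identity into the lemma. By the preceding lemma, every knot satisfies $R(K)\ge\alpha(K)+br(K)$. It therefore suffices to establish $\alpha(K)=c(K)+2$ for every alternating knot $K$. The inequality $\alpha(K)\ge c(K)+2$ alone would suffice, but the equality is what the text records.

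I would establish the identity in two stages. First, for a prime alternating knot, I would invoke the Bae--Park upper bound $\alpha(K)\le c(K)+2$, valid for every nonsplit link. I would pair it with the lower bound $\alpha(K)\ge c(K)+2$ for prime alternating knots. That lower bound comes from the Morton--Beltrami inequality $\alpha(K)\ge \operatorname{span}_a F_K+2$, where $F_K$ is the Kauffman polynomial. It also uses Thistlethwaite's theorem that $\operatorname{span}_a F_K=c(K)$ for a reduced alternating diagram; this is the step where alternation is genuinely used.

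Second, for a composite alternating knot $K=K_1\#\cdots\#K_r$, I would use Menasco's theorem that each summand $K_i$ is again alternating. I would combine this with additivity of crossing number for alternating knots, $c(K)=\sum_i c(K_i)$, which holds because a reduced alternating diagram of a connected sum is the connected sum of reduced alternating diagrams, together with Kauffman--Murasugi--Thistlethwaite. Finally I would use the Cromwell connected-sum formula $\alpha(K_1\#K_2)=\alpha(K_1)+\alpha(K_2)-2$. By induction these give
\[
\alpha(K)=\sum_i\bigl(c(K_i)+2\bigr)-2(r-1)=c(K)+2 .
\]
Substituting this into (2.2) yields (2.3).

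The only nontrivial input is the lower bound $\alpha\ge c+2$ via the Kauffman polynomial span. It is cited rather than reproved, consistent with the paper's reference to Cromwell. The main point to check is that the Cromwell connected-sum formula holds as an equality, not merely as an upper bound. I would rely on the cited source for this; alternatively, the Morton--Beltrami bound applied directly to $K$ already gives $\alpha(K)\ge \operatorname{span}_a F_K+2=c(K)+2$, since $F$ is multiplicative and spans add, and only this inequality is needed for (2.3).
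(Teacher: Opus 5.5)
Your proposal is correct and follows the paper's route. Like the paper, you combine Lemma~2.1 with $\alpha(K)=c(K)+2$ for alternating knots, using the prime alternating case together with Cromwell's connected-sum formula; you also spell out the ingredients the paper only cites in one sentence (Morton--Beltrami with Thistlethwaite for the lower bound, Bae--Park for the upper bound, Menasco for alternating summands, and additivity of crossing number). Your closing shortcut is also valid: the Morton--Beltrami bound with multiplicativity of the Kauffman polynomial gives $\alpha(K)\ge c(K)+2$ directly, which is all (2.3) needs, so you do not need the equality form of Cromwell's formula.
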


We also use three standard connected-sum facts.

\begin{lemma}
Let $A$ and $B$ be nontrivial knots.
\begin{enumerate}[label=\textup{(\roman*)}]
  \item If $A$ and $B$ are alternating, then the connected sum of reduced
  alternating diagrams is again reduced alternating, and hence
  \[
    c(A\#B)=c(A)+c(B).
  \]
  \item Schubert's bridge formula is
  \[
    br(A\#B)=br(A)+br(B)-1.
  \]
  \item The stick number satisfies
  \[
    s(A\#B)\le s(A)+s(B)-3.
  \]
\end{enumerate}
\end{lemma}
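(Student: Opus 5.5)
The three parts are independent, so I will prove them one at a time. Each is a standard fact, and I will mostly reduce it to a classical theorem.

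For (i), take reduced alternating diagrams $D_A$ and $D_B$ with $c(A)$ and $c(B)$ crossings. I will form the connected sum along arcs on the outer boundary of each diagram. If the orientation of $D_B$ is chosen correctly (reflecting $D_B$ in the plane if needed, which does not change the knot type in the plane), then the over/under pattern alternates across both joining arcs, so $D_A\# D_B$ is alternating. Next I check reducedness. A nugatory crossing of $D_A\#D_B$ would be a crossing with a circle in the plane meeting the diagram only at that crossing. Such a circle can be isotoped to lie inside the $D_A$ side or the $D_B$ side of the decomposing circle, and it would then be nugatory in $D_A$ or $D_B$, which is a contradiction. By the Kauffman--Murasugi--Thistlethwaite theorem, a reduced alternating diagram realizes the crossing number. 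Hence $c(A\#B)=c(A)+c(B)$.

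For (ii), I will simply cite Schubert's theorem, since it is classical (Schulten's short proof could be referenced as well). If an argument is wanted, the inequality $\le$ is elementary: put $A$ and $B$ in minimal bridge position and join them at a maximum of one and a minimum of the other, which cancels one maximum. The reverse inequality is the deep half, and it is the main obstacle here. I do not intend to reprove it.

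For (iii), which is the concrete geometric step, I will take minimal stick realizations $P_A$ with $s(A)$ sticks and $P_B$ with $s(B)$ sticks.
\begin{enumerate}
\item Apply affine maps so that $P_A$ has a vertex $v$ on the boundary of its convex hull, with a supporting plane $\Pi$ touching $P_A$ only near $v$. Arrange $P_B$ symmetrically on the other side of $\Pi$, with a matching vertex $w$.
\item Position $P_B$ so that $w$ lies close to $v$, and so that the two edges at $v$ are nearly parallel to the two edges at $w$.
\item Delete the two edges at $v$ and the two edges at $w$. Reconnect the four loose endpoints with two new sticks that run near $\Pi$.
\end{enumerate}
This removes four sticks and adds two. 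That alone gives only $s(A)+s(B)-2$, so one further saving is needed. The standard refinement, which follows Adams et al., is to choose the configuration so that one of the new sticks extends an existing adjacent edge collinearly, merging two sticks into one. An equivalent approach is to choose $v$ so that the edge preceding it on $P_A$ and the edge following $w$ on $P_B$ can be made collinear.

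The main obstacle is verifying that this reconnection creates no crossings between the two halves. I would handle this by keeping the two polygons in disjoint half-spaces, except for a thin slab near $\Pi$ that contains only the new sticks. The knot type is then the connected sum, because a sphere close to $\Pi$ meets the resulting polygon in exactly two points and separates it into arcs of type $A$ and type $B$.
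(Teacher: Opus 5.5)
The paper gives no proof of this lemma; it quotes (i)--(iii) as standard facts. Your citation of Schubert for (ii), and your reduction of (i) to the Kauffman--Murasugi--Thistlethwaite theorem, are in line with that. There are two genuine errors, both in the arguments you supply yourself; the serious one is in (iii).

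\textbf{Part (i).} ``Reflecting $D_B$ in the plane if needed'' is not a valid repair.

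Reflecting the diagram in a line of the plane with over/under data kept, or in the projection plane, gives a diagram of the mirror image. You would then be computing $c(A\#\bar B)$, which is a different knot when $B$ is chiral, as $J\cong 6_2$ is.

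The knot-preserving version of the move, turning $D_B$ over in space, reflects the picture \emph{and} switches every crossing. These two effects cancel in the checkerboard (Tait) type of each crossing, so the move cannot cure a mismatch at the bands.

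What is true is this. Shade so that the unbounded region is unshaded. The sum along outer edges is alternating exactly when $D_A$ and $D_B$ have the same crossing type. If they do not, regard $D_B$ as a diagram on $S^2$ and make the \emph{other} region adjacent to the chosen edge the unbounded one. This swaps the shading, hence the type, without changing the knot.

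Reducedness is also cleaner by a region argument. A crossing is nugatory iff two opposite corners lie in one region. The sum only merges two pairs of regions across the bands, which creates no new such coincidence.

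\textbf{Part (iii).} Your construction never gets below $s(A)+s(B)-2$, and the proposed extra saving is not available as described.

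First, your ``equivalent approach'' makes collinear the edge preceding $v$ and the edge following $w$. Those are among the four edges you deleted.

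Second, and more fundamentally, a counting argument shows what is actually needed. Suppose $\Pi$ meets the final polygon $Q$ in two interior points of sticks, with $k_A$ and $k_B$ whole sticks on the two sides. Closing each side with the segment in $\Pi$ gives polygons of types $A$ and $B$ with $k_A+3$ and $k_B+3$ edges. So $|Q|=k_A+k_B+2\le s(A)+s(B)-3$ forces one closure to be a minimal-stick realization with an edge lying in $\Pi$ and the rest strictly on one side.
\begin{itemize}
\item Gluing at hull vertices gives $k_A=s(A)-2$ and $k_B=s(B)-2$.
\item A hull vertex does not supply that extra structure.
\item Merging a new stick with a surviving edge such as $[v^{--},v^-]$ simply presupposes that structure.
\end{itemize}

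Third, the slab picture is wrong. The points $v^\pm$ and $w^\pm$ are the far ends of the deleted edges, so the new sticks pass through the regions occupied by $P_A$ and $P_B$. What controls the knot type is that $\Pi$ meets $Q$ in two points, together with a check that each side closes up to a polygon isotopic to $P_A$, resp.\ $P_B$. That check is where a condition such as $w$ near $v$, with edges nearly continuing those at $v$, gets used.

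Fourth, every placement must be orientation-preserving. ``Symmetrically'', read as reflection in $\Pi$, yields $\bar B$.

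You have two options. Either cite Adams--Brennan--Greilsheimer--Woo, as the paper implicitly does, or add the missing ingredient. For example, suppose $P_A$ and $P_B$ each have an edge that is their entire intersection with a supporting plane.
\begin{itemize}
\item Place them in opposite half-spaces of one plane $H$ so that they share that edge $[a,b]$.
\item Delete $[a,b]$; this saves two sticks.
\item An orientation-preserving affine map fixing $a$ and $b$, and preserving $H$ and its sides, still has enough freedom to put the far end of $P_B$'s other edge at $a$ on the extension of $P_A$'s edge at $a$.
\item Those two sticks then merge, giving the third saving.
\end{itemize}
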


These facts together with Corollary~2.2 give a simple estimate for iterated connected sums.

\begin{proposition}\label{prop:estimate}
Let $J$ be a nontrivial alternating knot admitting a polygonal realization
with $r$ sticks. Put $c=c(J)$ and $b=br(J)$, and let $K_n=\#^nJ$. Then
\begin{equation}
  R(K_n)-s(K_n)\ge n(c+b+2-r). \tag{2.4}
\end{equation}
In particular, if $r<c+b+2$, then $R(K_n)-s(K_n)$ grows without bound.
\end{proposition}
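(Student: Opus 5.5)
The plan is to prove (2.4) by computing exact values of $c(K_n)$ and $br(K_n)$, bounding $s(K_n)$ from above, and then applying Corollary~2.2 to $K_n$. Everything is an induction on $n$ using the three parts of Lemma~2.3, with $K_1=J$ and $K_{n}=K_{n-1}\#J$.

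First I will show by induction that $K_n$ is a nontrivial alternating knot with $c(K_n)=nc$. The base case is the hypothesis on $J$. For the inductive step, $K_{n-1}$ and $J$ are both nontrivial alternating knots. Lemma~2.3(i) says the connected sum of their reduced alternating diagrams is again reduced alternating. Hence $K_n$ is alternating and $c(K_n)=c(K_{n-1})+c$. The connected sum of nontrivial knots is nontrivial, for instance because its crossing number $nc$ is positive. This keeps the hypotheses of Lemma~2.3 available at every stage.

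Next I will obtain the bridge number in the same way. Schubert's formula, Lemma~2.3(ii), gives $br(K_n)=br(K_{n-1})+b-1$, so $br(K_n)=n(b-1)+1$. Since $K_n$ is alternating, Corollary~2.2 yields
\[
R(K_n)\ge nc+n(b-1)+1+2=n(c+b-1)+3.
\]
For the stick number, $s(J)\le r$ by hypothesis. Lemma~2.3(iii) gives $s(K_n)\le s(K_{n-1})+r-3$, so inductively $s(K_n)\le nr-3(n-1)=n(r-3)+3$.

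Subtracting the two bounds gives $R(K_n)-s(K_n)\ge n(c+b-1)-n(r-3)=n(c+b+2-r)$, which is (2.4). If $r<c+b+2$, the coefficient is a positive integer, so the right-hand side tends to infinity with $n$. There is no real obstacle here. The only point needing care is to confirm at each step that $K_{n-1}$ is still nontrivial and alternating, so that Lemma~2.3(i) and Corollary~2.2 legitimately apply to the iterated sums.
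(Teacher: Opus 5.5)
Your proposal is correct and follows the paper's proof: you get $c(K_n)=nc$ and $br(K_n)=n(b-1)+1$ from Lemma~2.3, feed these into Corollary~2.2, bound $s(K_n)\le n(r-3)+3$ with the stick inequality, and subtract. Your explicit induction, which checks that each $K_{n-1}$ stays nontrivial and alternating so that Lemma~2.3(i) and Corollary~2.2 legitimately apply, only spells out what the paper leaves implicit.
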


\begin{proof}
By Lemma~2.3,
\[
  c(K_n)=nc,
  \qquad
  br(K_n)=nb-(n-1)=n(b-1)+1.
\]
Corollary~2.2 gives
\begin{equation}
  R(K_n)\ge nc+n(b-1)+1+2=n(c+b-1)+3. \tag{2.5}
\end{equation}
The stick-number inequality and the given $r$-stick realization give
\begin{equation}
  s(K_n)\le nr-3(n-1)=n(r-3)+3. \tag{2.6}
\end{equation}
Subtracting (2.6) from (2.5) yields (2.4).
\end{proof}

\section{An explicit eight-stick alternating knot}

Let $J$ be the octagonal knot obtained by joining the following
integer points in index order and then joining $p_7$ to $p_0$:
\begin{equation}
\begin{aligned}
p_0&=(-4,-6,-51),&p_1&=(-55,-80,89),\\
p_2&=(-71,-20,-81),&p_3&=(91,3,84),\\
p_4&=(-44,57,6),&p_5&=(-26,-40,-5),\\
p_6&=(30,22,7),&p_7&=(10,55,92).
\end{aligned} \tag{3.1}
\end{equation}
Write $e_i=p_ip_{i+1}$, with indices taken modulo $8$. We now certify the
three properties required by the estimate above.

\begin{proposition}\label{prop:J}
The octagonal knot in (3.1) defines a nontrivial alternating knot $J$
such that
\begin{equation}
  c(J)=6,
  \qquad
  br(J)=2,
  \qquad
  s(J)\le8,
  \qquad
  \det(J)=11. \tag{3.2}
\end{equation}
\end{proposition}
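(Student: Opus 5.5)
The plan is to certify everything from one explicit projection of the octagon in (3.1). The bound $s(J)\le 8$ is immediate, since (3.1) is an eight-stick polygon. We must only check that it is embedded. For each of the $\binom{8}{2}-8=20$ pairs of non-adjacent edges $e_i,e_j$, we verify that the segments are disjoint. An exact integer orientation test suffices: the determinant $\det(p_{i+1}-p_i,\,p_j-p_i,\,p_{j+1}-p_i)$ is either nonzero, so the segments are skew or non-coplanar, or, if it vanishes, a planar check shows no intersection. For adjacent edges we check non-collinearity.

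Next we fix a generic projection direction; projection to the $xy$-plane is the first choice, perturbed slightly if needed. We first check regularity: no vertex projects onto a non-incident edge, and no three edges share a projected point. We then compute, by solving $2\times2$ linear systems in exact rationals, all crossings between projected non-adjacent edges. For each crossing, comparing the $z$-coordinates of the two preimages gives the over/under data. Traversing $e_0,\dots,e_7$ in order and recording the crossing parameters yields a Gauss code with signs.

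The crucial and hardest step is to obtain, from this diagram, a reduced alternating diagram with exactly six crossings. The projection may have more than six crossings and may fail to alternate. If so, we first try other projection directions, for example the coordinate axes or small rational rotations, searching for one whose Gauss code has six crossings and alternates along the traversal. If no single projection works, we apply explicit Reidemeister II/I simplifications to the Gauss code until a six-crossing alternating code remains. Once such a diagram is found, we check that it is reduced, meaning it has no nugatory crossing. We then compute the determinant from its Goeritz (or coloring) matrix and expect $11$. This identifies $J$ among the six-crossing knots: $6_1$ has determinant $9$, $6_2$ has $11$, and $6_3$ has $13$. So $J=6_2$, which is consistent with its two-bridge nature.

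The invariants follow from there. A reduced alternating diagram of a knot is minimal by the Kauffman–Murasugi–Thistlethwaite theorem, so $c(J)=6$. This also makes $J$ nontrivial, which the determinant $11\ne1$ confirms independently. For $br(J)=2$, the value is at least $2$ because $J$ is nontrivial. For the upper bound, either we exhibit a projection height function on the diagram with two maxima, or we note that the identified knot $6_2$ is a rational (2-bridge) knot. The direct check is to count the local maxima of the $z$-coordinate (or of a suitable linear functional) along the polygon. Any such functional with exactly two local maxima on the octagon gives $br(J)\le2$, and this is a finite check on the eight vertices. We try $z$ first: the vertices are ordered by $z$ as $-51,89,-81,84,6,-5,7,92$, so the local maxima are $p_1,p_3,p_7$, and possibly $p_6$ is not one since $7<92$. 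That gives three maxima, so we would search over other linear functionals, where two maxima should be attainable.
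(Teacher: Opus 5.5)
Your outline has the same skeleton as the paper's proof: project to the $xy$-plane, read off an alternating Gauss code, apply Kauffman--Murasugi--Thistlethwaite (Tait) for $c(J)=6$, identify $6_2$ via $\det=11$, and bound $br(J)$ with a linear height function. The gap is that it never certifies any of the four values, and for a statement about one explicit polygon the finite check \emph{is} the proof. Each decisive step is left as ``the projection may have more than six crossings'', ``we expect $11$'', or ``two maxima should be attainable''.

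The missing computation is short. In the $xy$-projection exactly six of the $20$ non-adjacent pairs cross: $(e_0,e_2)$, $(e_0,e_4)$, $(e_2,e_4)$, $(e_2,e_5)$, $(e_3,e_6)$, $(e_3,e_7)$. The strands on top are $e_2$, $e_0$, $e_4$, $e_5$, $e_3$, $e_7$ respectively. Traversing $e_0,\dots,e_7$ gives $0_U1_O2_U0_O3_U4_O5_U2_O1_U3_O4_U5_O$. This word alternates, and every chord interlaces another, so no crossing is nugatory. The Fox-coloring matrix read from this word has $5\times5$ minors of absolute value $11$. So no search over directions is needed.

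Your fallback would not be a valid substitute anyway. Applying Reidemeister I/II reductions ``until a six-crossing alternating code remains'' is not guaranteed to succeed, because reaching a minimal diagram can require Reidemeister III or even crossing-increasing moves. As written, that step presupposes its conclusion.

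The bridge-number bound has the same gap. You correctly found that $z$ has three local maxima ($p_1,p_3,p_7$), but you stopped there. The paper takes $h=x$, whose values around the polygon are $-4,-55,-71,91,-44,-26,30,10$, all distinct. The only local maxima are $p_3$ and $p_6$, so $br(J)\le 2$ follows at once.

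Your alternative route (``$6_2$ is $2$-bridge'') is valid only after the determinant has actually been computed. The identification must then range over all knots with crossing number $6$, including the composites $3_1\#3_1$ and $3_1\#3_1^{*}$. These have determinant $9$ and are excluded, but your list $6_1,6_2,6_3$ omits them.
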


\begin{proof}
Project the polygon onto the $xy$-plane. One checks that the vertices are distinct,
no three are collinear, and every intersection of the projected edges is transverse
and lies in the interior of the segments.  A direct inspection of the $28$ pairs of
non-adjacent edges shows that exactly six pairs intersect.  Listing them in the order
they appear along the knot, the over/under information given by the $z$--coordinates
of the endpoints yields the alternating diagram described by the Gauss word
\[
  0_U,1_O,2_U,0_O,3_U,4_O,5_U,2_O,1_U,3_O,4_U,5_O,
\]
where the subscript indicates whether the strand passes \emph{over} or \emph{under}.
The corresponding unsigned word is $0,1,2,0,3,4,5,2,1,3,4,5$, in which every chord
interlaces another; hence the diagram is reduced and alternating.  Tait's theorem
then implies $c(J)=6$.

This alternating diagram is isotopic to the standard diagram of the knot $6_2$ in
the Rolfsen table (it has the same Gauss word after renaming crossings).  For the
knot $6_2$ it is known that $\det(6_2)=11$, so $\det(J)=11$; in particular $J$ is
nontrivial.  (One may also obtain the determinant directly from the Wirtinger
presentation of the above diagram: labeling the six arcs by the edges
$e_0,e_1,e_2,e_3,e_4,e_5$ gives the $6\times 6$ matrix in the original manuscript,
whose cofactor is $-11$.)

To bound the bridge number we use the height function $h(x,y,z)=x$.  On each
straight segment the function $h$ is strictly monotone.  The only vertices at which
$h$ attains a local maximum are
\[
p_3:\ 91>\max\{-71,-44\},\qquad p_6:\ 30>\max\{-26,10\}.
\]
After a small isotopy near those vertices that does not change the knot type and
keeps the function Morse elsewhere, we obtain a Morse function with exactly two
maxima.  Hence $br(J)\le 2$; every nontrivial knot has bridge number at least $2$,
so $br(J)=2$. The displayed polygon uses eight sticks, therefore $s(J)\le 8$.
\end{proof}

\noindent {\bf Remark:} 
All intersection data and the $z$--order of the crossings can be verified by
elementary arithmetic on the coordinates (3.1); the computation is completely
explicit and requires only a few lines of straightforward checking.

\section{The unbounded family}

\begin{corollary}
Let $J$ be the octagonal knot in (3.1), and set $K_n=\#^nJ$.
Then
\begin{equation}
  c(K_n)=6n,
  \qquad
  br(K_n)=n+1,
  \qquad
  R(K_n)\ge7n+3,
  \qquad
  s(K_n)\le5n+3. \tag{4.1}
\end{equation}
Consequently,
\begin{equation}
  R(K_n)-s(K_n)\ge2n\longrightarrow\infty. \tag{4.2}
\end{equation}
\end{corollary}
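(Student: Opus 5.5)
The plan is to specialize Proposition~\ref{prop:estimate} to the knot $J$ of Proposition~\ref{prop:J}. We take $c=6$, $b=2$ and $r=8$; the value $r=8$ is legitimate because (3.1) is an explicit eight-stick realization. The hypotheses of Proposition~\ref{prop:estimate} are that $J$ is nontrivial and alternating, which Proposition~\ref{prop:J} supplies. Nontriviality comes from $\det(J)=11\neq 1$, and alternation from the reduced alternating Gauss word.

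First I will record the invariants of $K_n=\#^nJ$ by induction on $n$ using Lemma~2.3. Iterating part (i) is allowed because each $K_{n-1}$ is again alternating (part (i) says the sum of reduced alternating diagrams is reduced alternating), so $c(K_n)=c(K_{n-1})+6=6n$. Part (ii) gives $br(K_n)=br(K_{n-1})+2-1$, hence $br(K_n)=n+1$. Part (iii), applied with the eight-stick polygon for each summand, gives $s(K_n)\le s(K_{n-1})+5$, hence $s(K_n)\le 8+5(n-1)=5n+3$.

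Next, since $K_n$ is alternating, Corollary~2.2 applies and gives
\[
R(K_n)\ge c(K_n)+br(K_n)+2=6n+(n+1)+2=7n+3,
\]
which matches (2.5) with $c+b-1=7$. Subtracting the stick bound yields $R(K_n)-s(K_n)\ge (7n+3)-(5n+3)=2n$. This is exactly (2.4) with $c+b+2-r=2>0$, so the gap tends to infinity and (4.2) follows.

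The only step needing care is that $K_n$ is alternating for every $n$, since Corollary~2.2 requires this. The fact is immediate from Lemma~2.3(i) by induction. The argument also depends on the connected-sum formulas holding for nontrivial summands, and every $K_{n-1}$ is nontrivial: its crossing number $6(n-1)$ is positive for $n\ge 2$. Everything else is arithmetic.
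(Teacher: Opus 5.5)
Your proposal is correct and is essentially the paper's proof. The paper substitutes $c=6$, $b=2$, $r=8$ from Proposition~\ref{prop:J} into Proposition~\ref{prop:estimate}, whose proof is exactly your iteration of Lemma~2.3 followed by Corollary~2.2; you additionally make explicit that each $K_n$ stays alternating and nontrivial, which the paper uses without comment when it applies Corollary~2.2 to $K_n$.
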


\begin{proof}
Proposition~\ref{prop:J} gives $c(J)=6$, $br(J)=2$, and an
eight-stick realization. Substitution into Proposition~\ref{prop:estimate}
gives (4.1) and (4.2).
\end{proof}

The equality $c(K_n)=6n$ shows that the knots $K_n$ are pairwise distinct.
Since $J$ is nontrivial, $K_n$ is a connected sum of two nontrivial knots for
every $n\ge2$, and hence is composite. The nontrivial knots $T_{p-1,p}$ used
by Johnson~\cite{Johnson2012} are prime torus knots. Thus
Corollary~4.1 provides an infinite family distinct
from the original torus-knot family.

\end{document}